\numberwithin{equation}{section}
\newtheorem{theorem}{Theorem}[section] 
\newtheorem{lemma}{Lemma}[section] 
\newtheorem{remark}{Remark}[section]
\title{
Travelling Wave Solutions of the Bistable Reaction-Diffusion Equation with Nonlinear jump discontinuity }
\author{Hou, Lingling ({\color{blue}hll67800@163.com}) }
\date{ June, 2022  }
\begin{document}

\maketitle
\noindent \textbf{Abstract.} \ 
This paper is concerned with the existence and the stability of travelling wave solution to a bistable reaction-diffusion equation with a jump discontinuious point on nonlinear term. Sub-super solution method is used throughout this paper. As a matter of fact, this nonlinear discontinuity term is frequently emerging when study the travelling wave solution of reaction-diffusion-ODE systems. Hence, it is meaningful to consider this kind of problem.

\section{Problem}

Consider the reaction-diffusion equation:
\begin{equation}\label{eq-RDE}
    u_t=u_{xx}+f(u),
\end{equation}
where
\begin{equation*}
    f(u)=\left\{\begin{aligned}
    f_0(u), \quad& u\in[0,a] \\ f_1(u), \quad& u\in[a,1]
    \end{aligned}\right.
\end{equation*}
with $0<a<1$. Here $f_0\in C^1[0,a]$ and $f_1\in C^1[a,1]$ satisfy
\begin{itemize}
    \item [(H1)] $f_0(0)=0, f'_0(0)<0,f_1(0)=0$ and $f'_1(1)<0$.
    \item [(H2)] $f_0(u)<0$ for $u\in(0,a]$, $f_1(u)>0$ for $u\in[a,1)$.
    \item [(H3)] $\int_0^a f_0(u)du+\int_a^1 f_1(u)du>0$. 
\end{itemize}

I would like to study the travelling wave solution to \eqref{eq-RDE} under the moving coordinate $z=x+ct$, satisfies $u(z)\to0,$ as $z\to-\infty$, and $u(z)\to1,$ as $z\to+\infty$. The method I am going to adopt is basically rely on the works of \cite{Chen}. 

First, denote
\[\underline{\alpha}=\inf\limits_{u\in[0,a]}\frac{f_0(u)}{u},\quad \overline{\alpha}=\sup\limits_{u\in[0,a]}\frac{f_0(u)}{u},\] and
\[\underline{\beta}=\inf\limits_{u\in[a,1]}\frac{f_1(u)}{u-1},\quad \overline{\beta}=\sup\limits_{u\in[a,1]}\frac{f_1(u)}{u-1}.\]
Due to the assumption {\rm (H1)-(H2)}, and notice $-\infty<\underline{\alpha}\leq\bar{\alpha}<0,  -\infty<\underline{\beta}\leq\bar{\beta}<0$.

\begin{remark}\label{rmk:fu}
Thanks to the assumption {\rm (H3)}, it holds the fact:
\begin{itemize}
    \item {\rm(1)} For sufficiently small positive $\epsilon$,  it holds $f_0(u)-\epsilon u<\underline{\alpha}u<f_0(u)$ and  $f_0(u)<\overline{\alpha}u<f_0(u)+\epsilon u$, when $0\leq u\leq a$. On the other hand, $f_1(u)<\underline{\beta}(u-1)<f_1(u)-\epsilon(u-1)$ and $f_1(u)+\epsilon(u-1)<\overline{\beta}(u-1)<f_1(u)$, when $a\leq u\leq 1$.
    \item {\rm(2)} $\sqrt{-\overline{\alpha}}a<\sqrt{-\underline{\alpha}}a<\sqrt{-\overline{\beta}}(1-a)<\sqrt{-\underline{\beta}}(1-a)$.
\end{itemize}
\end{remark}

Next, define four discontinuous functions:
\begin{equation}
    \underline{f}(u)=\left\{\begin{aligned}
   & \underline{\alpha}u,\quad & u\in[0,a] \\ &  \underline{\beta}(u-1),\quad & u\in[a,1]
    \end{aligned}
    \right.
\end{equation}
\begin{equation}
    \overline{f}(u)=\left\{\begin{aligned}
   & \overline{\alpha}u,\quad & u\in[0,a] \\ &  \overline{\beta}(u-1),\quad & u\in[a,1]
    \end{aligned} 
    \right.
\end{equation}
\begin{equation}
    \underline{g}(u)=\left\{\begin{aligned}
   & \underline{\alpha}u,\quad & u\in[0,a] \\ &  \overline{\beta}(u-1),\quad & u\in[a,1]
    \end{aligned}
    \right.
\end{equation}
\begin{equation}
    \overline{g}(u)=\left\{\begin{aligned}
   & \overline{\alpha}u,\quad & u\in[0,a] \\ &  \underline{\beta}(u-1),\quad & u\in[a,1]
    \end{aligned} 
    \right.
\end{equation}
Thus it's easily to find the $C^0$ solutions of 
\begin{equation}\label{eq:TWsol}
    \left\{\begin{aligned}
    & u_{zz}-cu_z+h(u)=0, \\ & u(0)=a, u(-\infty)=0, u(+\infty)=1
    \end{aligned}\right.
\end{equation}
for $h(u)=\underline{f}(u), h(u)=\overline{f}(u), h(u)=\underline{g}(u)$ and $h(u)=\overline{g}(u)$.
In order to consider the $C^1$ solution of \eqref{eq:TWsol}, the following four cases are needed.
\begin{itemize}
    \item (i) $h(u)=\underline{f}(u)$. The $C^0$ solutions for any $c\geq0$ is given:
    \begin{equation}\label{eq:under-u}
        \underline{u}(z)=\left\{ \begin{aligned}
        &\quad a\exp({\underline{\lambda}_0^+(c)z}), \quad &z<0, \\ & 1+(a-1)\exp({\underline{\lambda}_1^-(c)z}), \quad &z\geq 0. 
        \end{aligned} \right.
    \end{equation}
    Where $\underline{\lambda}_0^+(c)=(c+\sqrt{c^2-4\underline{\alpha}})/2>0$, $\underline{\lambda}_1^-(c)=(c-\sqrt{c^2-4\underline{\beta}})/2<0$. By choosing $\underline{c}$ such that 
    \[a=\frac{\underline{\lambda}_1^-(\underline{c})}{\underline{\lambda}_1^-(\underline{c})-\underline{\lambda}_0^+(\underline{c})},\]
    then we get a unique $C^1$ solution of \eqref{eq:TWsol}.
    \item (ii) $h(u)=\overline{f}(u)$. Similarly to the case (i), the $C^0$ solutions for any $c\geq0$ is given:
    \begin{equation}\label{eq:over-u}
        \overline{u}(z)=\left\{ \begin{aligned}
        &\quad a\exp({\overline{\lambda}_0^+(c)z}), \quad &z<0, \\ & 1+(a-1)\exp({\overline{\lambda}_1^-(c)z}), \quad &z\geq 0. 
        \end{aligned} \right.
    \end{equation}
    Where $\overline{\lambda}_0^+(c)=(c+\sqrt{c^2-4\overline{\alpha}})/2>0$, $\overline{\lambda}_1^-(c)=(c-\sqrt{c^2-4\overline{\beta}})/2<0$. By choosing $\overline{c}$ such that 
    \[a=\frac{\overline{\lambda}_1^-(\overline{c})}{\overline{\lambda}_1^-(\overline{c})-\overline{\lambda}_0^+(\overline{c})},\]
    then we get a unique $C^1$ solution of \eqref{eq:TWsol}.
    \item (iii) $h(u)=\underline{g}(u)$. The $C^0$ solutions for any $c\geq0$ is given:
    \begin{equation}\label{eq:check-u}
        \check{u}(z)=\left\{ \begin{aligned}
        &\quad a\exp({\underline{\lambda}_0^+(c)z}), \quad &z<0, \\ & 1+(a-1)\exp({\overline{\lambda}_1^-(c)z}), \quad &z\geq 0. 
        \end{aligned} \right.
    \end{equation}
    By choosing $\check{c}$ such that 
    \[a=\frac{\overline{\lambda}_1^-(\check{c})}{\overline{\lambda}_1^-(\check{c})-\underline{\lambda}_0^+(\check{c})},\]
    then we get a unique $C^1$ solution of \eqref{eq:TWsol}.
    \item (iv) $h(u)=\overline{g}(u)$ The $C^0$ solutions for any $c\geq0$ is given:
    \begin{equation}\label{eq:hat-u}
        \hat{u}(z)=\left\{ \begin{aligned}
        &\quad a\exp({\overline{\lambda}_0^+(c)z}), \quad &z<0, \\ & 1+(a-1)\exp({\underline{\lambda}_1^-(c)z}), \quad &z\geq 0. 
        \end{aligned} \right.
    \end{equation}
   By choosing $\hat{c}$ such that 
    \[a=\frac{\underline{\lambda}_1^-(\hat{c})}{\underline{\lambda}_1^-(\hat{c})-\overline{\lambda}_0^+(\hat{c})},\]
    then we get a unique $C^1$ solution of \eqref{eq:TWsol}.
\end{itemize}

\section{Existence of travelling wave solution}
For the existence and uniqueness of the travelling wave solution to \eqref{eq-RDE}, we need to prove a few lemmas.

Let $w^-(z)=u_z(z), \mbox{for}~z<0$ and $w^+(z)=u_z(z), \mbox{for}~z>0$. Also, $\underline{w}(z)=\underline{u}_z(z)$ and $\overline{w}(z)=\overline{u}_z(z)$.

\begin{lemma}\label{lem:subp-sol}
Assume that {\rm(H1)-(H3)} hold. Then $\underline{u}^-(z)=a\exp({\underline{\lambda}_0^+(c)z})$ and $\overline{u}^-(z)=a\exp({\overline{\lambda}_0^+(c)z})$ are the sub-solution and super-solution of 
\begin{equation}\label{eq:uminu}
    \left\{\begin{aligned}
    &u^-_{zz}-cu^-_z+f_0(u^-)=0, \\ & u^-(0)=a, u^-(-\infty)=0.
    \end{aligned} \right.
\end{equation}
Similarly, $\underline{u}^+(z)=1+(a-1)\exp({\underline{\lambda}_1^-(c)z})$ and $\overline{u}^+(z)=1+(a-1)\exp({\overline{\lambda}_1^-(c)z})$ are the super-solution and sub-solution of
\begin{equation}\label{eq:uplus}
    \left\{ \begin{aligned} 
    &u^+_{zz}-cu^+_z+f_1(u^+)=0, \\ & u^+(0)=a, u^+(+\infty)=1.
    \end{aligned} \right.
\end{equation}
\end{lemma}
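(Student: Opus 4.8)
The plan is to verify the four differential inequalities by direct substitution, exploiting the fact that each candidate's exponent is deliberately chosen as a root of a characteristic quadratic, so that the second-order operator collapses to a linear expression. Concretely, I introduce the two operators $L^-[u] := u_{zz}-cu_z+f_0(u)$ (for functions with range in $[0,a]$) and $L^+[u] := u_{zz}-cu_z+f_1(u)$ (range in $[a,1]$). Recall the convention that a function is a sub-solution of a problem $L[u]=0$ when $L[\,\cdot\,]\ge 0$ together with the matching one-sided boundary inequalities, and a super-solution when $L[\,\cdot\,]\le 0$.

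First I would treat $\underline{u}^-(z)=a\exp(\underline{\lambda}_0^+(c)z)$. Since $\underline{\lambda}_0^+(c)$ is by definition the positive root of $\lambda^2-c\lambda+\underline{\alpha}=0$, differentiation gives $\underline{u}^-_{zz}-c\underline{u}^-_z=(\lambda^2-c\lambda)\underline{u}^-=-\underline{\alpha}\,\underline{u}^-$, whence $L^-[\underline{u}^-]=f_0(\underline{u}^-)-\underline{\alpha}\,\underline{u}^-$. For $z<0$ one has $0<\underline{u}^-<a$, so $\underline{u}^-$ stays in the regime governed by $f_0$, and Remark~\ref{rmk:fu}(1) gives $\underline{\alpha}u\le f_0(u)$ on $[0,a]$, i.e. $L^-[\underline{u}^-]\ge 0$; thus $\underline{u}^-$ is a sub-solution. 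The identical computation for $\overline{u}^-(z)=a\exp(\overline{\lambda}_0^+(c)z)$ yields $L^-[\overline{u}^-]=f_0(\overline{u}^-)-\overline{\alpha}\,\overline{u}^-$, and the companion inequality $f_0(u)\le\overline{\alpha}u$ from Remark~\ref{rmk:fu}(1) makes this $\le 0$, so $\overline{u}^-$ is a super-solution of \eqref{eq:uminu}.

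The two $u^+$ cases run the same way, with the negative roots of $\lambda^2-c\lambda+\underline{\beta}=0$ and $\lambda^2-c\lambda+\overline{\beta}=0$; here the only new bookkeeping is that writing $v-1=(a-1)e^{\lambda z}$ gives $v_{zz}-cv_z=(\lambda^2-c\lambda)(v-1)=-\underline{\beta}(v-1)$ (resp. $-\overline{\beta}(v-1)$), so that $L^+[\underline{u}^+]=f_1(\underline{u}^+)-\underline{\beta}(\underline{u}^+-1)$ and $L^+[\overline{u}^+]=f_1(\overline{u}^+)-\overline{\beta}(\overline{u}^+-1)$. For $z>0$ one checks $a<v<1$, so $f_1$ governs the dynamics, and the inequalities $f_1(u)\le\underline{\beta}(u-1)$ and $\overline{\beta}(u-1)\le f_1(u)$ of Remark~\ref{rmk:fu}(1) force $L^+[\underline{u}^+]\le 0$ (super-solution) and $L^+[\overline{u}^+]\ge 0$ (sub-solution). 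The boundary data are then immediate: each candidate equals $a$ at $z=0$, while $\underline{u}^-,\overline{u}^-\to 0$ as $z\to-\infty$ (the exponents being positive) and $\underline{u}^+,\overline{u}^+\to 1$ as $z\to+\infty$ (the exponents being negative).

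I expect the only genuine care to lie in the sign bookkeeping and in confirming the range of each candidate. The sign reversals in the $u^+$ analysis, where $u-1<0$ on $[a,1]$ so that multiplying the defining inequalities for $\underline{\beta},\overline{\beta}$ flips their direction, are the easiest place to slip; and it is essential to record that each exponential actually maps its half-line into exactly the interval $[0,a]$ or $[a,1]$ on which the relevant one-sided estimate of Remark~\ref{rmk:fu} is valid, since otherwise one would be invoking $f_0$ or the wrong linear bound outside its range. Beyond these points the argument is a routine pointwise verification, requiring no global construction or limiting procedure.
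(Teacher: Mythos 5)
Your proof is correct and follows essentially the same route as the paper's: substitute each exponential ansatz into the equation, use that the exponent is a root of the corresponding characteristic polynomial to reduce the operator to $f_0(u)-\underline{\alpha}u$ (etc.), and invoke part (1) of Remark \ref{rmk:fu} to fix the sign. Your write-up is in fact more careful than the paper's one-line argument — in particular you correctly track the inequality reversal coming from $u-1<0$ and conclude that $\underline{u}^-$ is a \emph{sub}-solution, whereas the paper's proof text misstates this as "super-solution" in apparent contradiction with its own lemma statement.
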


\begin{lemma}\label{lem:wpm}
Assume {\rm (H1)-(H3)} hold. Then 
 \[\lim\limits_{z\uparrow 0}\underline{w}^-(z)=\underline{\lambda}_0^+(c)a>\lim\limits_{z\uparrow 0}\overline{w}^-(z)=\overline{\lambda}_0^+(c)a,\] and 
 \[\lim\limits_{z\downarrow 0}\underline{w}^+(z)=\underline{\lambda}_1^-(c)(a-1)>\lim\limits_{z\downarrow 0}\overline{w}^+(z)=\overline{\lambda}_1^-(c)(a-1).\] 
 In addition, 
 \[\underline{w}^-(0)=\underline{w}^+(0),\quad \mbox{for}~c=\underline{c},\]
 and
 \[\overline{w}^-(0)=\overline{w}^+(0),\quad \mbox{for}~c=\overline{c}.\]
\end{lemma}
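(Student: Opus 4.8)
The plan is to reduce every assertion to an explicit computation of one-sided derivatives, combined with the elementary monotonicity of the characteristic roots in the coefficients $\underline{\alpha},\overline{\alpha},\underline{\beta},\overline{\beta}$. First I would differentiate the defining exponentials. For $z<0$ one has $\underline{w}^-(z)=a\,\underline{\lambda}_0^+(c)\exp(\underline{\lambda}_0^+(c)z)$ and $\overline{w}^-(z)=a\,\overline{\lambda}_0^+(c)\exp(\overline{\lambda}_0^+(c)z)$, so letting $z\uparrow0$ gives the values $a\,\underline{\lambda}_0^+(c)$ and $a\,\overline{\lambda}_0^+(c)$; likewise for $z>0$ one obtains $(a-1)\underline{\lambda}_1^-(c)$ and $(a-1)\overline{\lambda}_1^-(c)$ as $z\downarrow0$. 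This pins down all four limits appearing in the statement.

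For the two inequalities I would invoke that the map $s\mapsto\sqrt{c^2-4s}$ is strictly decreasing. Since $\underline{\alpha}\le\overline{\alpha}<0$, this yields $\underline{\lambda}_0^+(c)\ge\overline{\lambda}_0^+(c)$, and multiplying by $a>0$ produces the first inequality. Since $\underline{\beta}\le\overline{\beta}<0$, the same monotonicity gives $\sqrt{c^2-4\underline{\beta}}\ge\sqrt{c^2-4\overline{\beta}}$, hence $\underline{\lambda}_1^-(c)\le\overline{\lambda}_1^-(c)$; multiplying by $a-1<0$ reverses the order and gives the second inequality. The strictness claimed in the statement is inherited from the strict separation $\underline{\alpha}<\overline{\alpha}$, $\underline{\beta}<\overline{\beta}$, which holds precisely when the ratios $f_0(u)/u$ and $f_1(u)/(u-1)$ are non-constant; I would make this non-degeneracy explicit, since in the purely linear case the limits coincide.

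For the $C^1$ matching I would simply unwind the definitions of $\underline{c}$ and $\overline{c}$. Starting from $a=\underline{\lambda}_1^-(\underline{c})/(\underline{\lambda}_1^-(\underline{c})-\underline{\lambda}_0^+(\underline{c}))$, cross-multiplying and collecting terms gives $a\,\underline{\lambda}_0^+(\underline{c})=(a-1)\underline{\lambda}_1^-(\underline{c})$, which is exactly $\lim_{z\uparrow0}\underline{w}^-(z)=\lim_{z\downarrow0}\underline{w}^+(z)$, i.e. $\underline{w}^-(0)=\underline{w}^+(0)$. The identical rearrangement of the defining relation for $\overline{c}$ yields $\overline{w}^-(0)=\overline{w}^+(0)$. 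Thus the matching conditions are algebraic identities, automatic once the speeds are admitted.

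The genuinely substantive point, which I expect to be the main obstacle and which I would settle before the matching step, is the well-definedness of $\underline{c},\overline{c}\in[0,\infty)$. For this I would study $\Phi(c)=\lambda_1^-(c)/(\lambda_1^-(c)-\lambda_0^+(c))$: at $c=0$ it equals $\sqrt{-\beta}/(\sqrt{-\beta}+\sqrt{-\alpha})$, while $\Phi(c)\to0^+$ as $c\to+\infty$ because $\lambda_1^-(c)\to0^-$ and $\lambda_0^+(c)\to+\infty$. By continuity a solution of $\Phi(c)=a$ exists precisely when $a<\sqrt{-\beta}/(\sqrt{-\beta}+\sqrt{-\alpha})$, equivalently $\sqrt{-\alpha}\,a<\sqrt{-\beta}\,(1-a)$, and this holds for both the pair $(\underline{\alpha},\underline{\beta})$ and the pair $(\overline{\alpha},\overline{\beta})$ by virtue of Remark~\ref{rmk:fu}(2). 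Uniqueness, if required, would follow from the strict monotonicity of $\Phi$ in $c$, which is the one computation I anticipate being slightly delicate.
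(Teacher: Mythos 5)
Your proposal is correct and follows essentially the same route as the paper, which simply reads the one-sided derivatives off the explicit formulas \eqref{eq:under-u} and \eqref{eq:over-u} and rearranges the defining relations for $\underline{c}$ and $\overline{c}$; the paper's entire proof is a single sentence to that effect. Your two additional observations---that strictness of the inequalities requires $\underline{\alpha}<\overline{\alpha}$ and $\underline{\beta}<\overline{\beta}$ (which fails if $f_0$ or $f_1$ is linear, a degenerate case the paper does not exclude), and that the existence of $\underline{c},\overline{c}\in[0,\infty)$ rests on the inequality $\sqrt{-\alpha}\,a<\sqrt{-\beta}\,(1-a)$ from Remark \ref{rmk:fu}(2)---are correct and fill genuine gaps that the paper leaves implicit.
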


\begin{lemma}\label{lem:cdepend}
Under the assumptions {\rm (H1)-(H3)}, we claim:
\begin{itemize}
    \item {\rm(I)} $w^-(z;c)$ is increasing with respect to $c$; $w^+(z;c)$ is decreasing with respect to $c$.
    \item {\rm(II)} If $c=0$, then $\overline{w}^-(0)<\underline{w}^-(0)<\overline{w}^+(0)<\underline{w}^+(0)$.
    \item {\rm(III)} $\underline{w}^-(0;\check{c})=\overline{w}^+(0;\check{c})$ and $\overline{w}^-(0;\hat{c})=\underline{w}^+(0;\hat{c})$.
\end{itemize}
\end{lemma}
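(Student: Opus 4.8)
The three claims are essentially decoupled, so the plan is to treat them in turn, each reducing to an elementary computation once the right quantities are identified. I would begin with claim (II), which is the most immediate. Setting $c=0$ in the definitions gives $\underline{\lambda}_0^+(0)=\sqrt{-\underline{\alpha}}$, $\overline{\lambda}_0^+(0)=\sqrt{-\overline{\alpha}}$, $\underline{\lambda}_1^-(0)=-\sqrt{-\underline{\beta}}$ and $\overline{\lambda}_1^-(0)=-\sqrt{-\overline{\beta}}$. Evaluating the four slope functions at the matching point $z=0$ then yields $\underline{w}^-(0)=a\sqrt{-\underline{\alpha}}$, $\overline{w}^-(0)=a\sqrt{-\overline{\alpha}}$, $\underline{w}^+(0)=(1-a)\sqrt{-\underline{\beta}}$ and $\overline{w}^+(0)=(1-a)\sqrt{-\overline{\beta}}$, using $(a-1)<0$. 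The asserted chain $\overline{w}^-(0)<\underline{w}^-(0)<\overline{w}^+(0)<\underline{w}^+(0)$ is then precisely the string of inequalities in Remark \ref{rmk:fu}(2), so (II) follows by direct substitution with no further work.

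Next, for claim (I) I would argue monotonicity through the two scalar functions $\lambda_0^+(c)$ and $\lambda_1^-(c)$. Differentiating, $\frac{d}{dc}\lambda_0^+(c)=\frac12\bigl(1+c/\sqrt{c^2-4\alpha}\bigr)$ and $\frac{d}{dc}\lambda_1^-(c)=\frac12\bigl(1-c/\sqrt{c^2-4\beta}\bigr)$; since $\alpha,\beta<0$ and $c\geq 0$ we have $\sqrt{c^2-4\alpha}>c$ and $\sqrt{c^2-4\beta}>c$, so both derivatives are strictly positive, i.e. $\lambda_0^+$ and $\lambda_1^-$ are strictly increasing in $c$. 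The cleanest way to phrase the conclusion is in the phase plane: along $z<0$ one has $w^-=\lambda_0^+(c)\,u$ and along $z>0$ one has $w^+=\lambda_1^-(c)\,(u-1)$, so for each fixed $u$ the quantity $w^-$ increases with $c$, while $w^+$, carrying the negative factor $(u-1)$, decreases with $c$. In particular at the matching point $u=a$ (that is, $z=0$) the endpoint slopes $a\lambda_0^+(c)$ and $(a-1)\lambda_1^-(c)$ are, respectively, strictly increasing and strictly decreasing in $c$, which is exactly what is needed downstream.

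Finally, claim (III) is just the $C^1$-matching condition read off from the definitions of $\check{c}$ and $\hat{c}$. Starting from $a=\overline{\lambda}_1^-(\check{c})/\bigl(\overline{\lambda}_1^-(\check{c})-\underline{\lambda}_0^+(\check{c})\bigr)$ and clearing denominators gives $a\,\underline{\lambda}_0^+(\check{c})=(a-1)\,\overline{\lambda}_1^-(\check{c})$, whose two sides are precisely $\underline{w}^-(0;\check{c})$ and $\overline{w}^+(0;\check{c})$; the analogous rearrangement of the equation defining $\hat{c}$ yields $a\,\overline{\lambda}_0^+(\hat{c})=(a-1)\,\underline{\lambda}_1^-(\hat{c})$, i.e. $\overline{w}^-(0;\hat{c})=\underline{w}^+(0;\hat{c})$.

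None of the three parts presents a genuine analytic difficulty; the real care is interpretational. The statement that ``$w^-(z;c)$ is increasing in $c$'' is in fact false pointwise in $z$ once $z$ is sufficiently negative, since $\partial_c w^-(z;c)=a\,\tfrac{d\lambda_0^+}{dc}\,e^{\lambda_0^+z}\,(1+\lambda_0^+ z)$ changes sign. The correct and intended reading, which I would make explicit, is monotonicity of the phase-plane trajectory $w=w(u)$ for each fixed $u\in(0,a)$ or $u\in(a,1)$, equivalently of the endpoint slope at the matching point $z=0$; this is the only version of (I) used in the subsequent speed-matching argument, and with that reading the lemma is immediate.
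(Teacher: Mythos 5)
Parts (II) and (III) of your proposal are correct and coincide with the paper's argument: (II) is exactly the substitution $c=0$ into the four exponents followed by Remark \ref{rmk:fu}(2), and (III) is the algebraic rearrangement of the defining equations for $\check{c}$ and $\hat{c}$. Your point that the monotonicity in (I) should be read in the phase plane, as a statement about $w$ as a function of $u$ (equivalently about the slope at the matching point), rather than pointwise in $z$, is also well taken --- the paper itself silently switches to the variable $u$ in its proof of (I).

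However, your proof of (I) has a genuine gap. You establish monotonicity in $c$ only for the \emph{linear} comparison trajectories $w^-=\lambda_0^+(c)\,u$ and $w^+=\lambda_1^-(c)\,(u-1)$, i.e.\ for $\underline{w},\overline{w}$ and their analogues, by differentiating the explicit exponents $\lambda_0^+(c)$ and $\lambda_1^-(c)$. But the $w^\pm(\,\cdot\,;c)$ in claim (I) are the derivatives of the solutions of the genuine nonlinear problems \eqref{eq:uminu} and \eqref{eq:uplus} with the original $f_0$, $f_1$, for which there is no closed form; this is the version actually needed in the proof of Theorem \ref{thm:exist}, where the unique speed $c^*$ is obtained from the strict monotonicity of the true matching slopes $w^-(0;c)$ and $w^+(0;c)$. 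The paper closes this gap by writing $w^-$ as a function of $u$ via the phase-plane equation $\frac{dw^-}{du}=c-\frac{f_0(u)}{w^-}$ with $w^-(0;c)=0$, differentiating this ODE with respect to the parameter $c$ to get $\frac{dw_c^-}{du}=1+\frac{f_0(u)\,w_c^-}{(w^-)^2}$, and then using the integrating factor $G^-(u)=w_c^-\exp\bigl(-\int_{a/2}^{u}\frac{f_0(s)}{(w^-)^2(s)}\,ds\bigr)$, whose derivative is the strictly positive quantity $\exp\bigl(-\int_{a/2}^{u}\frac{f_0(s)}{(w^-)^2(s)}\,ds\bigr)$, to conclude $w_c^->0$ from $G^-(0)=0$; the decrease of $w^+$ in $c$ is obtained symmetrically from the terminal condition $w^+(1;c)=0$. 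This variational argument for the nonlinear trajectory is the key idea missing from your proposal; your computation with the $\lambda$'s only reproves what is already contained in the explicit formulas \eqref{eq:under-u}--\eqref{eq:hat-u}.
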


\begin{remark}
The conclusions in Lemma \ref{lem:cdepend} and Lemma \ref{lem:wpm} imply 
$$0<\check{c}<\min\{\underline{c},\overline{c}\}\leq\max\{\underline{c},\overline{c}\}<\hat{c}.$$
\end{remark}

\begin{theorem}[Existence and uniqueness]\label{thm:exist}
Assume that {\rm(H1)-(H3)} hold. Then there exists a positive constant $c^*$, such that \eqref{eq-RDE} has a unique $C^1$ travelling wave solution $u^*(z+c^*t)$ with positive speed $c^*$, where $\check{c}\leq c^*\leq\hat{c}$.
\end{theorem}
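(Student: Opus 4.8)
The plan is to reduce the construction of a global $C^1$ profile to a single scalar matching condition at the interface $z=0$, and then to solve that condition by a monotonicity-plus-continuity (intermediate value) argument. For each admissible speed $c>0$, Lemma \ref{lem:subp-sol} furnishes an ordered pair of sub- and super-solutions for each of the half-line problems \eqref{eq:uminu} and \eqref{eq:uplus}: one checks $\underline{u}^-\le\overline{u}^-$ on $(-\infty,0]$ (since $\underline{\lambda}_0^+>\overline{\lambda}_0^+>0$) and $\overline{u}^+\le\underline{u}^+$ on $[0,+\infty)$ (since $\underline{\lambda}_1^-<\overline{\lambda}_1^-<0$ and $a-1<0$), so the sub-super solution method produces monotone solutions $u^-(\cdot\,;c)$ and $u^+(\cdot\,;c)$ trapped between them, pinned down by the normalizations $u^\pm(0)=a$, $u^-(-\infty)=0$, $u^+(+\infty)=1$. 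I would then introduce the interface mismatch
\[\Phi(c)=w^-(0;c)-w^+(0;c),\]
so that a $C^1$ travelling wave of speed $c$ exists precisely when $\Phi(c)=0$. Gluing $u^-$ and $u^+$ at $z=0$ yields a continuous, monotone profile with $u^*\in[0,a]$ for $z\le0$ and $u^*\in[a,1]$ for $z\ge0$, so that $f(u^*)=f_0(u^*)$ on the left and $f(u^*)=f_1(u^*)$ on the right; at a zero of $\Phi$ the one-sided derivatives agree, making the profile $C^1$ (with $u_{zz}$ merely jumping at $z=0$ since $f_0(a)\neq f_1(a)$).

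Next I would record the sandwiching of the true interface slopes. Because $\underline{u}^-\le u^-\le\overline{u}^-$ with common value $a$ at $z=0$, comparing one-sided difference quotients gives $\overline{w}^-(0;c)\le w^-(0;c)\le\underline{w}^-(0;c)$; symmetrically, $\overline{w}^+(0;c)\le w^+(0;c)\le\underline{w}^+(0;c)$. The explicit endpoint values of the linear slopes are then read off from Lemma \ref{lem:wpm}.

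To locate the zero of $\Phi$ in $[\check{c},\hat{c}]$ I would evaluate the sign at the endpoints using Lemma \ref{lem:cdepend}(III). At $c=\check{c}$,
\[w^-(0;\check{c})\le\underline{w}^-(0;\check{c})=\overline{w}^+(0;\check{c})\le w^+(0;\check{c}),\]
so $\Phi(\check{c})\le0$; at $c=\hat{c}$,
\[w^-(0;\hat{c})\ge\overline{w}^-(0;\hat{c})=\underline{w}^+(0;\hat{c})\ge w^+(0;\hat{c}),\]
so $\Phi(\hat{c})\ge0$. By Lemma \ref{lem:cdepend}(I) the slope $w^-(0;c)$ is increasing and $w^+(0;c)$ is decreasing in $c$, so $\Phi$ is strictly increasing; combined with continuous dependence of $w^\pm(0;c)$ on $c$, the intermediate value theorem delivers a unique $c^*\in[\check{c},\hat{c}]$ with $\Phi(c^*)=0$. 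Since the preceding remark gives $\check{c}>0$, the speed $c^*\ge\check{c}$ is positive, and strict monotonicity of $\Phi$ forces $c^*$—and, after fixing the translation by $u(0)=a$, the profile—to be unique.

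The main obstacle I anticipate lies not in the sign-chasing but in the analytic underpinnings on the unbounded half-lines: showing that the sub-super solution method genuinely yields a solution of each half-line problem attaining the correct limits at $\pm\infty$, that this solution is unique (via the saddle structure at the equilibria $u=0$ and $u=1$, using $f_0'(0)<0$ and $f_1'(1)<0$), and—most importantly—that the interface slopes $w^\pm(0;c)$ depend continuously on $c$, so that the intermediate value theorem legitimately applies to $\Phi$. Making the one-sided slope inequalities in the sandwiching step rigorous likewise calls for a strong comparison (Hopf-type) argument rather than mere non-strict comparison, and this is where I would concentrate the technical effort.
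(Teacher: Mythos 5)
Your proposal follows essentially the same route as the paper: sandwich the half-line solutions $u^\pm(\cdot\,;c)$ between the explicit exponential sub- and super-solutions of Lemma \ref{lem:subp-sol}, and then use Lemma \ref{lem:wpm} and Lemma \ref{lem:cdepend} (monotonicity in $c$ plus the endpoint matchings at $\check{c}$ and $\hat{c}$) to find the unique speed at which the one-sided derivatives agree at $z=0$. In fact your write-up makes explicit the shooting function $\Phi(c)=w^-(0;c)-w^+(0;c)$ and the endpoint sign computation that the paper's one-line proof (and Figure \ref{fig-wc}) only gestures at, and it correctly identifies the continuity of $w^\pm(0;c)$ in $c$ as the remaining technical point.
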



\begin{proof}[Proof of Lemma \ref{lem:subp-sol}]
We just give the proof of $\underline{u}^-(z)$, others are followed by using the same way.

Substitute $\underline{u}^-(z)=a\exp({\underline{\lambda}_0^+(c)z})$ into the first second-order partial differential equation of \eqref{eq:uminu}, we can get 
\[\underline{u}^-_{zz}-c\underline{u}^-_z+f(\underline{u}_z^-)\geq0,\] by using the claim (1) in Remark \ref{rmk:fu}. Thus we say $\underline{u}^-(z)$ is the super-solution of \eqref{eq:uminu}.
\end{proof}

\begin{proof}[Proof of Lemma \ref{lem:wpm}]
It is obviously by using the conclusion of \eqref{eq:under-u} and \eqref{eq:over-u}.
\end{proof}

\begin{proof}[Proof of Lemma \ref{lem:cdepend}]
(I) We consider the following ODE problem
\begin{equation}\label{eq-wum}
    \left\{\begin{aligned}
    &\frac{dw^-(u;c)}{du}=c-\frac{f_0(u)}{w^-(u;c)}, \\
    & w^-(0;c)=0,
    \end{aligned} \right. \quad \mbox{for}~u\in[0,a].
\end{equation}
Derivative with respect to $c$ in \eqref{eq-wum}, we obtain
\begin{equation*}
     \frac{dw_c^-}{du}=1+\frac{f_0(u)w_c^-}{(w^-)^2},
\end{equation*}
Let \[G^-(u)=w_c^-(u;c)\exp\left({-\int_{a/2}^u\frac{f_0(s)}{(w^-)^2(s)}ds}\right). \]
Then we get
\begin{equation*}
    \frac{dG^-(u)}{du}=\exp\left({-\int_{a/2}^u\frac{f_0(s)}{(w^-)^2(s)}ds}\right)>0.
\end{equation*}
Hence, we conclude that $G^-(u)>G^-(0)=0$, this implies $w^-_c(u;c)>0$. Thus $w^-(u;c)$ is increasing with respect to $c$.

Similarly, when we consider the following ODE problem
\begin{equation}\label{eq-wup}
    \left\{\begin{aligned}
    &\frac{dw^+}{du}=c-\frac{f_1(u)}{w^+(u;c)}.\\
    & w^+(1;c)=0,
    \end{aligned} \right.\quad \mbox{for}~u\in[a,1].
\end{equation}
Derivative with respect to $c$, we have
\begin{equation*}
    \frac{dw_c^+}{du}=1+\frac{f_1(u)w_c^+}{(w^+)^2}.
\end{equation*}
Let \[G^+(u)=w_c^+(u;c)\exp\left({\int_{u}^{1-a/2}\frac{f_0(s)}{(w^+)^2(s)}ds}\right). \]
An easily calculation showed $dG^+(u)/du>0$, then $G^+(u)<G(1)=0$. Thus $w^+_c(u;c)<0$ and $w^+(u;c)$ is decreasing with respect to $c$.

\medskip
(II) If $c=0$, then $\underline{\lambda}_0^+(0)=\sqrt{-4\underline{\alpha}}/2,\overline{\lambda}_0^+(0)=\sqrt{-4\overline{\alpha}}/2,\overline{\lambda}_1^-(0)=-\sqrt{-4\overline{\beta}}/2,\underline{\lambda}_1^-(0)=-\sqrt{-4\underline{\beta}}/2$. Hence the claim (2) in Remark \ref{rmk:fu} imply the result.(see Figure \ref{fig-wc} (a))

\medskip
(III) The results are followed in \eqref{eq:check-u} and \eqref{eq:hat-u}. (see Figure \ref{fig-wc} (b) and (c))
\end{proof}

\begin{proof}[Proof of Theorem \ref{thm:exist}]
Lemma \ref{lem:subp-sol} provides for any $c\geq0$, there are $C^0$ solutions to \eqref{eq:TWsol} with $h(u)=f(u)$:
\begin{equation*}
    \left\{\begin{aligned}
    &\underline{u}^-(z)\leq u^-(z)\leq\overline{u}^-(z), \quad &\mbox{for}~ z<0, \\  &\overline{u}^+(z)\leq u^+(z)\leq\underline{u}^+(z), \quad & \mbox{for}~ z>0. 
    \end{aligned}\right.
\end{equation*}
Combining with the Lemma \ref{lem:cdepend}, there exists a $c^*$ between $\check{c}$ and $\hat{c}$, such that $u^*(x+c^*t)$ is the unique $C^1$ solution to \eqref{eq-RDE}. 
\end{proof}

\begin{figure}
    \centering
    \subfigure[$\underline{w}^-(0)<\overline{w}^+(0)$]{\includegraphics[scale=0.5]{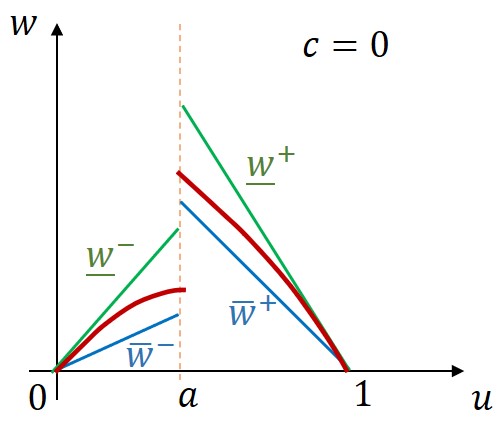}}
    \subfigure[$\underline{w}^-(0)=\overline{w}^+(0)$]{\includegraphics[scale=0.5]{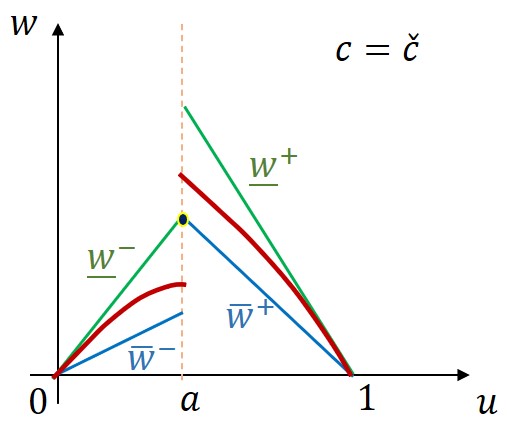}}
    \subfigure[$\overline{w}^-(0)=\underline{w}^+(0)$]{\includegraphics[scale=0.5]{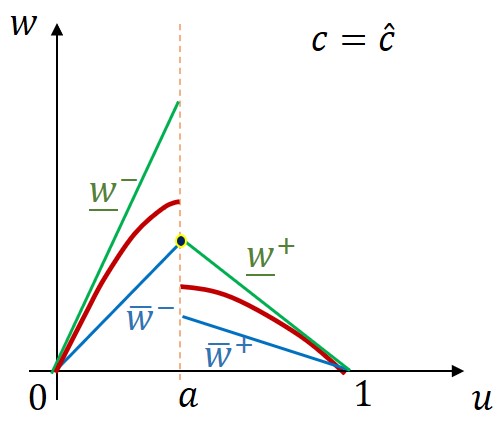}}
    \subfigure[$\overline{w}^-(0)<\overline{w}^+(0)$]{\includegraphics[scale=0.5]{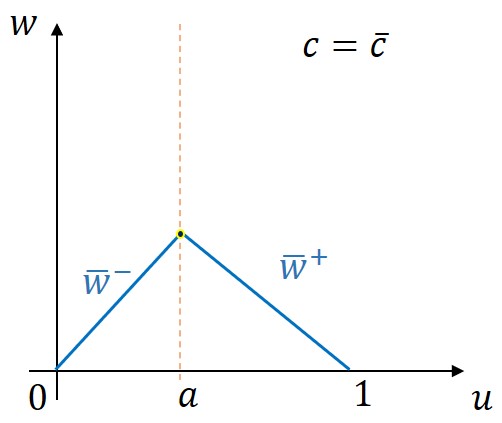}}
    \subfigure[${w}^-(0)={w}^+(0)$]{\includegraphics[scale=0.5]{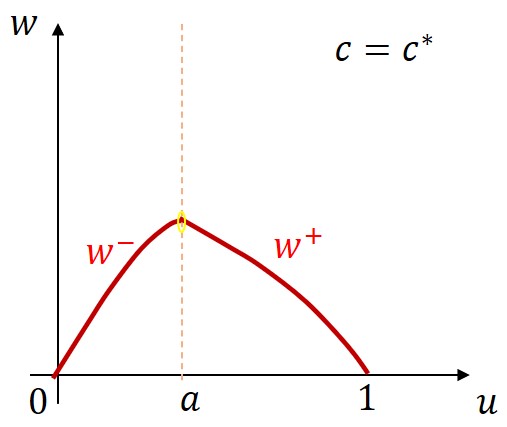}}
    \subfigure[$\underline{w}^-(0)=\underline{w}^+(0)$]{\includegraphics[scale=0.5]{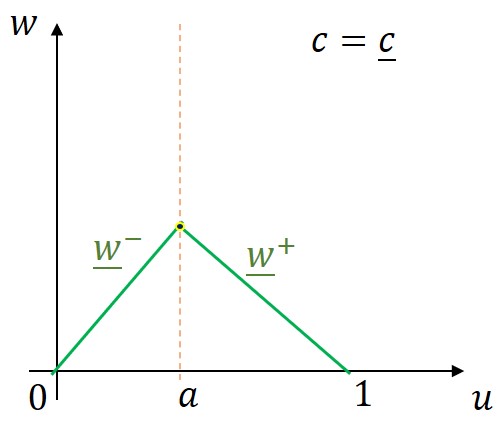}}
    \caption{$c$ is increasing from figure (a) to figure (c). On the left side of $u=a$, $\underline{w}^-,w^-,\overline{w}^-$ are moving up, on the right side of $u=a$, $\underline{w}^+,w^+,\overline{w}^+$ are moving down, as c increases. Figure (d) showed the $C^1$ solution of  $\overline{u}(z)$, figure (e) showed the $C^1$ travelling wave solution of \eqref{eq-RDE} when $c=c^*$, and figure (f) showed the $C^1$ solution of $\underline{u}(z)$. }
    \label{fig-wc}
\end{figure}

\section{Stability of travelling wave solution}

\begin{lemma}\label{lem-vphi}
Let $k$ be a bounded function, and $\varphi(x,t)$ is the solution of
\begin{equation}\label{eq:vphi}
    \left\{\begin{aligned}
    &\varphi_t-\varphi_{xx}-k\varphi\geq0, \quad & (x,t)\in\mathbb{R}\times[0,T], \\
    &\varphi(x,0)\geq0, &x\in\mathbb{R}.
    \end{aligned}
    \right.
\end{equation}
Then for every large positive constant $L$, there exists $\varepsilon(t,L)$ such that 
\begin{equation}\label{sol-vphi}
    \min\limits_{|x|<L}\varphi(x,t)\geq \varepsilon(t,L)\int_{-L}^{L}\varphi(x,t)dx. \quad \forall t\in[0,T].
\end{equation}
\end{lemma}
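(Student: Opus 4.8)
The plan is to reduce the differential inequality \eqref{eq:vphi} to a statement about nonnegative supersolutions of the \emph{pure} heat equation, and then to invoke the parabolic comparison principle together with an explicit Gaussian lower bound for the heat kernel. First I would record that $\varphi\ge 0$ on $\mathbb{R}\times[0,T]$: since $\varphi$ is a supersolution of the linear operator $\partial_t-\partial_{xx}-k$ with $k$ bounded and with nonnegative initial data, comparison against the trivial solution $0$ gives $\varphi\ge 0$. Next, setting $M:=\sup|k|<\infty$ and $w:=e^{Mt}\varphi$, a direct computation using $\varphi\ge 0$ and $M+k\ge 0$ turns \eqref{eq:vphi} into $w_t-w_{xx}\ge e^{Mt}(M+k)\varphi\ge 0$, so $w$ is a nonnegative heat supersolution. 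Since $e^{-Mt}>0$ is a common factor on both sides of \eqref{sol-vphi}, it cancels, and it suffices to prove the inequality for $w$; equivalently one may assume $k\equiv 0$ from the outset.

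Then I would compare $w$ with the heat flow emanating from the part of the data supported in $[-L,L]$. Let $G(x,t)=(4\pi t)^{-1/2}\exp(-x^2/4t)$ be the one-dimensional heat kernel and let $v$ solve the heat equation with datum $w(\cdot,0)\,\mathbf{1}_{[-L,L]}$. Because $w(\cdot,0)\ge v(\cdot,0)$ and $w$ is a supersolution, comparison yields $w\ge v$. For $|x|\le L$ and $|y|\le L$ one has $|x-y|\le 2L$, hence $G(x-y,t)\ge (4\pi t)^{-1/2}e^{-L^2/t}$, and therefore, for every $|x|<L$,
\begin{equation*}
 w(x,t)\ \ge\ v(x,t)=\int_{-L}^{L}G(x-y,t)\,w(y,0)\,dy\ \ge\ \frac{e^{-L^2/t}}{\sqrt{4\pi t}}\int_{-L}^{L}w(y,0)\,dy .
\end{equation*}
Undoing the gauge, this already produces a clean bound of the desired shape with $\varepsilon(t,L)=e^{-Mt}(4\pi t)^{-1/2}e^{-L^2/t}$, but with the spatial integral taken over the \emph{initial} slice rather than the current one.

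The hard part is precisely to pass from $\int_{-L}^{L}\varphi(y,0)\,dy$ to the current-time integral $\int_{-L}^{L}\varphi(y,t)\,dy$ that actually appears in \eqref{sol-vphi}. This step is delicate, because at a single time slice the minimum of a supersolution over $[-L,L]$ does not control its integral there: mass entering from outside the interval can render $w(\cdot,t)$ arbitrarily non-uniform across $[-L,L]$, so a one-sided (supersolution) bound, which only controls growth from below, cannot by itself give a constant depending on $t$ and $L$ alone. The natural remedy is to run the comparison on an earlier time window, say $[t/2,t]$: the parabolic Harnack inequality for nonnegative solutions provides $\sup_{|x|\le L}w(x,t/2)\le C(t,L)\,\inf_{|x|\le L}w(x,t)$, and integrating the left-hand side over $[-L,L]$ then bounds $\min_{|x|<L}w(\cdot,t)$ from below by a multiple of the spatial integral of $w$ at time $t/2$.

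I expect this last comparison across time to be the main obstacle, and it is the genuine content of the lemma: obtaining the integral literally at time $t$ (rather than at time $0$ or $t/2$) requires an additional two-sided input, available in the intended stability setting because there $\varphi$ solves the equation with equality and is bounded, so that $e^{-Mt}\varphi$ is also a heat subsolution and $\varphi(\cdot,t)$ admits a matching upper Gaussian bound. I would therefore combine the supersolution lower bound for the minimum with the subsolution upper bound for the integral, arranged so that the common data factor cancels; making this cancellation quantitative, i.e. showing the two Gaussian weights differ only by a factor of $t$ and $L$, is where the real work lies and is the step I would treat most carefully.
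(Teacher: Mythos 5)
Your first two paragraphs reproduce, almost verbatim, the argument the paper actually gives: the gauge factor $e^{-\|k\|_\infty t}$ (your $w=e^{Mt}\varphi$), comparison with the heat flow of the data restricted to $[-L,L]$, and the pointwise kernel bound $e^{-(x-y)^2/4t}\ge e^{-L^2/t}$ for $|x|,|y|\le L$, yielding $\varepsilon(t,L)=e^{-\|k\|_\infty t-L^2/t}\big/\bigl(2\sqrt{\pi t}\bigr)$ and a lower bound by $\int_{-L}^{L}\varphi(y,0)\,dy$. The discrepancy you then flag is genuine and is present in the paper itself: the displayed chain of inequalities in the paper's proof ends with the integral of $\varphi(\cdot,0)$, while \eqref{sol-vphi} asserts control by $\int_{-L}^{L}\varphi(x,t)\,dx$, and no argument bridging the two time slices is supplied. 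Your instinct that one-sided (supersolution) information cannot supply that bridge is correct; in fact the time-$t$ form is false for general nonnegative supersolutions of \eqref{eq:vphi}, since one may add to $\varphi$ a nonnegative supersolution with zero initial data generated by a source concentrated near a single point of $(-L,L)$ just before time $t$, making $\int_{-L}^{L}\varphi(\cdot,t)$ of order one while $\min_{|x|<L}\varphi(\cdot,t)$ is arbitrarily small.

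The resolution is that the lemma should be read (and stated) with the integral taken at the initial time, i.e. $\min_{|x|<L}\varphi(x,t)\ge\varepsilon(t,L)\int_{-L}^{L}\varphi(y,0)\,dy$; this is the standard Fife--McLeod/Chen auxiliary estimate, it is exactly what the paper's computation proves, and it is what the application in Theorem \ref{Thm:stab} requires once the time origin is shifted to $t$ and the estimate is run over a window of length $\tau$. Your proposed repair via the parabolic Harnack inequality would not close the gap as stated in the lemma, because Harnack applies to nonnegative \emph{solutions}, whereas the function $\Phi$ to which the lemma is applied in the stability proof is only a supersolution of the linearized equation (a super-solution minus a solution); so the correct move is to amend the statement rather than to strengthen the proof. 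Apart from this (correctly diagnosed) issue, your proposal is sound and coincides with the paper's method.
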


\begin{lemma}\label{lem:lim01}
Assume that {\rm (H1)-(H3)} hold, $u(x,t)$ is the solution of \eqref{eq-RDE} with the initial value $u_0(x)$. Then 
\begin{equation}\label{lim-1}
   \lim\limits_{t\to\infty} \mathop{\underline{\lim}}\limits_{x\to+\infty}u(x,t)=1.
\end{equation}
\begin{equation}\label{lim-0}
   \lim\limits_{t\to\infty} \mathop{\overline{\lim}}\limits_{x\to-\infty}u(x,t)=0.
\end{equation}
\end{lemma}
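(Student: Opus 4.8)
The plan is to prove the two limits by a comparison argument that decouples the behaviour at $+\infty$ from that at $-\infty$ and reduces each to the stability of an equilibrium of the kinetic ODE $\dot v=f(v)$. Throughout I take the initial datum in the natural class for the stability theory, namely $0\le u_0\le1$ together with the threshold conditions $\liminf_{x\to+\infty}u_0(x)>a$ and $\limsup_{x\to-\infty}u_0(x)<a$. First I would record that, since $f(0)=f(1)=0$ by {\rm(H1)}, the constants $0$ and $1$ are respectively a sub- and a super-solution of \eqref{eq-RDE}, so the comparison principle gives $0\le u(x,t)\le1$ for all $t\ge0$. I would also observe that on $[0,1]$ one has $f(u)\ge -Mu$ for a finite constant $M$, because $f_0(u)/u$ is bounded on $[0,a]$ and $f_1\ge0$ on $[a,1]$; hence $u$ itself satisfies $u_t-u_{xx}-ku\ge0$ with the bounded coefficient $k=-M$, so that Lemma \ref{lem-vphi} is applicable to $\varphi=u$.

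For \eqref{lim-1} the upper estimate $\liminf_{x\to+\infty}u(x,t)\le1$ is immediate from $u\le1$, so the content is the matching lower bound. Fix $b\in(a,1)$ with $u_0(x)\ge b$ for $x\ge X_0$, and let $\underline v(t)$ solve $\dot{\underline v}=f(\underline v)$ with $\underline v(0)=b$; since $f=f_1>0$ on $(a,1)$ while $f_1(1)=0$ and $f_1'(1)<0$ by {\rm(H1)}--{\rm(H2)}, the state $1$ is asymptotically stable with basin containing $(a,1]$, whence $\underline v(t)\uparrow1$. I would then build a genuine sub-solution of \eqref{eq-RDE} of barrier type,
\[\underline\phi(x,t)=\underline v(t)-A\exp\!\bigl(-\gamma(x-X_0)+\sigma t\bigr),\]
with $A,\gamma,\sigma>0$ chosen so that $\underline\phi(\cdot,0)\le u_0$ and $\underline\phi_t-\underline\phi_{xx}-f(\underline\phi)\le0$; the exponential term absorbs the diffusive influence coming from the region $x<X_0$ (where $u$ may be small), while leaving $\underline\phi(x,t)\to\underline v(t)$ as $x\to+\infty$. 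Comparison then yields $u(x,t)\ge\underline\phi(x,t)$, and letting $x\to+\infty$ and afterwards $t\to\infty$ gives $\liminf_{x\to+\infty}u(x,t)\ge\underline v(t)\to1$. Here Lemma \ref{lem-vphi}, applied to $\varphi=u$, supplies the uniform positive lower bound on compact sets that lets the barrier be fitted under $u$ and prevents the $1$-phase near $+\infty$ from being pinched below the threshold $a$.

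The proof of \eqref{lim-0} is the mirror image: starting from $b'<a$ with $u_0(x)\le b'$ for $x\le X_1$, I would let $\overline v(t)$ solve $\dot{\overline v}=f(\overline v)$ with $\overline v(0)=b'$, which decreases to $0$ because $f=f_0<0$ on $(0,a]$ and $0$ is asymptotically stable by {\rm(H1)}, and then place a super-solution $\overline\phi(x,t)=\overline v(t)+A\exp(\gamma(x-X_1)+\sigma t)$ above $u$; sending $x\to-\infty$ and then $t\to\infty$ yields $\limsup_{x\to-\infty}u(x,t)\le\overline v(t)\to0$. I expect the main obstacle to be the jump discontinuity of $f$ at $u=a$: both the comparison principle and the verification of the differential inequalities for $\underline\phi$ and $\overline\phi$ must be carried out across the level $u=a$, where $f$ is not even continuous. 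One therefore has to split the analysis into the regions $\{u<a\}$ and $\{u>a\}$, use the sign conditions {\rm(H2)} together with the linear bounds of Remark \ref{rmk:fu}(1) on each piece, and treat the crossing of $u=a$ by a one-sided (rather than Lipschitz) comparison, much as in the $C^0$-to-$C^1$ matching already used for the fronts above. A secondary difficulty is quantifying the long-time decoupling of the two far fields, which is precisely where the mass-to-minimum estimate of Lemma \ref{lem-vphi} does the real work.
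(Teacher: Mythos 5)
Your argument reaches the same two limits as the paper, but by a genuinely different route. The paper builds no travelling exponential barrier: it introduces auxiliary Cauchy problems $v_0,v_1$ governed by the smooth pieces $f_0,f_1$ \emph{separately} (with initial data frozen at constants $a_0$, resp.\ $a_1$, away from the relevant far field), squeezes $v_0,v_1$ between the spatially homogeneous ODE solutions $q_0,q_1$ of $\dot q=f_i(q)$, $q(0)=a$, to get $v_1\to1$ and $v_0\to0$ uniformly in $x$, and then controls the difference $u_1-v_1$ (resp.\ $u_0-v_0$) by a heat-kernel/Duhamel estimate showing it is $O(1/\sqrt{t})$ for $x\ge M+\sqrt{4K_1}\,t$; Lemma \ref{lem-vphi} is not used there at all. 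Your route --- the Fife--McLeod/Aronson--Weinberger barrier $\underline\phi=\underline v(t)-Ae^{-\gamma(x-X_0)+\sigma t}$ with $\underline v$ the kinetic ODE solution started above $a$ --- is more self-contained and avoids the Duhamel computation; the paper's route has the advantage that each auxiliary problem sees only one $C^1$ piece of $f$, so the jump at $u=a$ never enters its differential inequalities.

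That jump is the one place where your plan, as written, has a gap. Writing $E=e^{-\gamma(x-X_0)+\sigma t}$, the sub-solution inequality reduces to $f(\underline v)-f(\underline v-AE)\le A(\sigma-\gamma^2)E$. In the regime $\underline v-AE<a\le\underline v$ the left side tends to the fixed positive jump $f_1(a)-f_0(a)$ and is \emph{not} $O(AE)$, so the linear bounds of Remark \ref{rmk:fu}(1) that you invoke do not close this case. The argument is rescuable, but for a different reason than the one you give: since $\underline v(t)\ge b>a$ for all $t$, this regime forces $AE>\underline v-a\ge b-a$, so the forcing term is bounded below by $(\sigma-\gamma^2)(b-a)$ and a sufficiently large $\sigma$ (depending on $(b-a)^{-1}$ and $\sup|f|$) dominates the bounded jump; you also must truncate $\underline\phi$ at $0$, where $f$ is undefined. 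Finally, placing the barrier under $u$ requires a comparison principle for the equation with an \emph{upward}-jumping discontinuous $f$ --- the unfavourable direction for comparison --- and your appeal to Lemma \ref{lem-vphi} as supplying ``the uniform positive lower bound that lets the barrier be fitted under $u$'' is not a substitute for that step. (The paper glosses over the same point when it relates $u$ to $u_1$ and $v_1$, so this is a shared weakness rather than one specific to your proof, but in your version it is load-bearing and should be addressed explicitly.)
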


\begin{lemma}\label{lem:xt-MT}
Let $u(x,t)$ be the solution of \eqref{eq-RDE} and $u^*(x+ct)$ be the travelling wave solution of \eqref{eq-RDE}. Then for $\forall \varepsilon>0$, there exists $M_\varepsilon>0$ and $T_\varepsilon>0$, such that
\begin{equation}\label{eq:betw}
    u^*(x+cT_\varepsilon-M_\varepsilon)-\varepsilon\leq u(x,T_\varepsilon)\leq u^*(x+cT_\varepsilon+M_\varepsilon)+\varepsilon.
\end{equation}
\end{lemma}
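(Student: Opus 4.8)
The plan is to show that at a single, sufficiently large time $T_\varepsilon$ the profile $x\mapsto u(x,T_\varepsilon)$ can be trapped between two spatial translates of the wave profile frozen at time $T_\varepsilon$, namely $u^*(\cdot + cT_\varepsilon \mp M_\varepsilon)$, up to the additive error $\varepsilon$. The only three ingredients I expect to need are: (a) the order bound $0\le u(x,t)\le 1$ for all $(x,t)$; (b) the boundary behaviour of the wave, $u^*(\xi)\to 0$ as $\xi\to-\infty$ and $u^*(\xi)\to 1$ as $\xi\to+\infty$; and (c) the convergence of the tails of $u(\cdot,t)$ to the correct states supplied by Lemma \ref{lem:lim01}. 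The argument is then a purely asymptotic ``squeeze'' and requires no further PDE estimate.

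First I would record that $[0,1]$ is invariant: since $f(0)=f_0(0)=0$ and $f(1)=f_1(1)=0$, the constants $0$ and $1$ are equilibrium solutions of \eqref{eq-RDE}, so the comparison principle underlying the sub--super solution framework gives $0\le u(x,t)\le 1$ whenever $0\le u_0\le 1$. Next, fix $\varepsilon>0$. By Lemma \ref{lem:lim01} I may choose $T_\varepsilon>0$ so large that $\mathop{\underline{\lim}}\limits_{x\to+\infty}u(x,T_\varepsilon)>1-\varepsilon$ and $\mathop{\overline{\lim}}\limits_{x\to-\infty}u(x,T_\varepsilon)<\varepsilon$. By the definition of these one-sided limits there is then an $R=R_\varepsilon>0$ with
\[
u(x,T_\varepsilon)>1-\varepsilon \text{ for } x>R, \qquad u(x,T_\varepsilon)<\varepsilon \text{ for } x<-R .
\]

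For the upper inequality I would invoke the boundary behaviour of $u^*$: since $u^*(\xi)\to1$ as $\xi\to+\infty$, there is $\Xi_+$ with $u^*(\xi)\ge 1-\varepsilon$ for $\xi\ge\Xi_+$, and I then choose $M_\varepsilon\ge \Xi_+ + R - cT_\varepsilon$. On the half-line $x\ge -R$ the argument satisfies $x+cT_\varepsilon+M_\varepsilon\ge\Xi_+$, so $u^*(x+cT_\varepsilon+M_\varepsilon)\ge 1-\varepsilon$ and hence $u(x,T_\varepsilon)\le 1\le u^*(x+cT_\varepsilon+M_\varepsilon)+\varepsilon$; on $x<-R$ one has $u(x,T_\varepsilon)<\varepsilon\le u^*(x+cT_\varepsilon+M_\varepsilon)+\varepsilon$ because $u^*\ge0$. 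The lower inequality is obtained symmetrically: using $u^*(\xi)\to0$ as $\xi\to-\infty$ pick $\Xi_-$ with $u^*(\xi)\le\varepsilon$ for $\xi\le\Xi_-$, enlarge $M_\varepsilon$ so that $x+cT_\varepsilon-M_\varepsilon\le\Xi_-$ for $x\le R$, and split at $x=R$, using $u\ge0$ on $x\le R$ and $u(x,T_\varepsilon)>1-\varepsilon\ge u^*(x+cT_\varepsilon-M_\varepsilon)-\varepsilon$ on $x>R$ since $u^*\le1$. Taking the larger of the two shifts yields a single $M_\varepsilon$ for which \eqref{eq:betw} holds.

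I do not anticipate a serious analytic obstacle here; the substance of the lemma is really carried by Lemma \ref{lem:lim01}. The two points that need the most care are the justification of the global bound $0\le u\le1$ in the presence of the jump discontinuity of $f$ at $u=a$ (where the comparison principle must be invoked through the sub--super solution formulation rather than a classical Lipschitz argument), and the bookkeeping that makes the \emph{same} pair $T_\varepsilon, M_\varepsilon$ serve both inequalities. Both are routine once the asymptotic statements are in place, so the main conceptual step remains the passage from the convergence in Lemma \ref{lem:lim01} to the pointwise tail bounds on $u(\cdot,T_\varepsilon)$.
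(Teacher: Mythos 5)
Your proposal is correct and follows essentially the same route as the paper: both use Lemma \ref{lem:lim01} to obtain pointwise tail bounds $u(x,T_\varepsilon)>1-\varepsilon$ for $x$ large and $u(x,T_\varepsilon)<\varepsilon$ for $x$ very negative, then shift the wave profile by an $M_\varepsilon$ large enough that $u^*$ is within $\varepsilon$ of its limiting states on the relevant half-lines, and close the squeeze with $0\le u, u^*\le 1$. Your choice of $M_\varepsilon$ via $\Xi_\pm$ and $R$ is just a slightly more explicit bookkeeping of the paper's $M_\varepsilon=\tilde M_\varepsilon+a_\varepsilon$.
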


\begin{lemma}\label{lem:supb}
Assume that {\rm (H1)-(H3)} hold, and $u^*(z)$ is the solution to \eqref{eq:TWsol} with $h(u)=f(u)$. Then there exist a small independent positive constant $\delta_0$ and a large positive constant $\sigma$, which depend on $u^*(z)$, such that for any $\delta\in(0,\delta_0]$ and every $z_0\in\mathbb{R}$, the functions $U^+$ and $U^-$ defined by
\begin{equation}
    U^\pm(x,t):=u^*(x+ct+z_0\pm\sigma\delta[1-e^{-\gamma t}])\pm\delta e^{-\gamma t}
\end{equation}
are a super-solution and a sub-solution to \eqref{eq-RDE} respectively. Here $\gamma:=\frac{1}{2}\min\{-f'_0(0),-f'_1(1)\}$.
\end{lemma}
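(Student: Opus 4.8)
The plan is to insert the two barriers into the parabolic operator $\mathcal{N}[v]:=v_t-v_{xx}-f(v)$ and reduce the super/sub-solution requirement to a single scalar inequality carried along the wave profile. Write $p(t):=\delta e^{-\gamma t}$ and, for $U^+$, $\xi:=x+ct+z_0+\sigma\delta[1-e^{-\gamma t}]$. Since $\partial_x\xi=1$ one has $U^+_x=u^{*\prime}(\xi)$, $U^+_{xx}=u^{*\prime\prime}(\xi)$, and $\partial_t U^+=u^{*\prime}(\xi)(c+\sigma\gamma p)-\gamma p$; substituting the profile equation $u^{*\prime\prime}=cu^{*\prime}-f(u^*)$ makes the $cu^{*\prime}$ terms cancel and gives
\begin{equation*}
\mathcal{N}[U^+]=\gamma p\,[\sigma u^{*\prime}(\xi)-1]+\big(f(u^*(\xi))-f(u^*(\xi)+p)\big),
\end{equation*}
with the mirror identity for $U^-$ (replace $p$ by $-p$ and flip the sign of the first bracket, with $\xi$ now carrying the $-$ shift). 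Everything thus reduces to showing this expression is $\ge0$ for $U^+$ and $\le0$ for $U^-$, for a large $\sigma$ and small $\delta_0$, uniformly in $z_0$ and $t$. I will fix the constants in the order $\gamma$ (given), then $\eta$ and $\delta_0$, then $\sigma$.

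Next I would dispose of the two routine regions. Fix $\eta>0$ small. On the far field $\{u^*(\xi)\in[0,\eta]\cup[1-\eta,1]\}$, both $u^*(\xi)$ and $u^*(\xi)+p$ sit near an equilibrium, so by the mean value theorem $f(u^*(\xi))-f(u^*(\xi)+p)=-f'(\theta)p$ with $\theta$ near $0$ or $1$ (extending $f_0,f_1$ as $C^1$ functions slightly beyond $[0,a]$ and $[a,1]$ to cover the overshoot). Because $\gamma=\tfrac12\min\{-f_0'(0),-f_1'(1)\}$ and $f_0',f_1'$ are continuous, shrinking $\eta$ (hence $\delta_0$) forces $-f'(\theta)\ge\gamma$, so the reaction difference is $\ge\gamma p$ and dominates the $-\gamma p$ term using only $u^{*\prime}\ge0$; the symmetric estimate handles $U^-$. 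On the interior $\{u^*(\xi)\in[\eta,1-\eta]\}$ the strictly monotone profile obeys $u^{*\prime}(\xi)\ge m_\eta>0$ while the reaction difference stays bounded, so taking $\sigma$ large lets $\gamma p\,\sigma u^{*\prime}$ swallow both the $-\gamma p$ term and the bounded reaction difference. This is the standard Fife--McLeod part.

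The genuine obstacle is the discontinuity of $f$ at $u=a$, which lies inside the interior region. On the thin band where $u^*(\xi)<a<u^*(\xi)+p$ (and symmetrically $u^*(\xi)-p<a<u^*(\xi)$ for $U^-$) the reaction difference is $f_0(u^*(\xi))-f_1(u^*(\xi)+p)$, which stays close to the fixed negative jump $f_0(a)-f_1(a)=-J$ with $J:=f_1(a)-f_0(a)>0$. This is an $O(1)$ quantity of the wrong sign, whereas the stabilising term $\gamma p\,\sigma u^{*\prime}$ is only $O(p)$ and degenerates as $\delta\to0$; no choice of $\sigma$ and $\delta_0$ rescues the pointwise inequality across this band. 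Thus the plain translate-plus-bump barrier is genuinely defective at the interface, and this is where the real work lies.

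To close this gap I would relax the pointwise requirement on the straddle band and verify the super/sub-solution property in the weak sense compatible with the comparison principle used in the previous lemmas, allowing the barrier a favourable corner at the interface. Modifying $U^+$ near the band so that $U^+_x$ carries a downward jump produces a negative singular part in $U^+_{xx}$, i.e.\ a positive singular part in $-U^+_{xx}$, whose sign is exactly what is needed to absorb the $-J$ contribution; symmetrically one gives $U^-$ an upward (convex) corner. The crux is then to make this interface correction precise: to check that the corrected $U^\pm$ remain correctly ordered, still relax to a translate of $u^*$ as $t\to\infty$, keep the amplitude/shift bookkeeping built into $\sigma\delta[1-e^{-\gamma t}]$ and $\delta e^{-\gamma t}$, and qualify as genuine super/sub-solutions for the discontinuous reaction term. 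Once that interface step is secured, the far-field and interior estimates above complete the proof.
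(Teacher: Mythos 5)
Your reduction of the super/sub-solution property to the scalar inequality $\gamma p\,[\sigma u^{*\prime}(\xi)-1]+f(u^*(\xi))-f(u^*(\xi)+p)\ge 0$ with $p=\delta e^{-\gamma t}$ is exactly the identity \eqref{eq-supU} of the paper, and your handling of the two routine regions matches (indeed improves on) the paper's: in the far field the paper invokes only the Lipschitz constants $K_0,K_1$, which yields a lower bound of the wrong sign where $u^{*\prime}$ is small, whereas your mean-value argument with $-f'(\theta)\ge\gamma$ near the equilibria is the correct Fife--McLeod step; in the interior both arguments use $u^{*\prime}\ge \epsilon^*>0$ and take $\sigma$ large.

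Where you diverge is at the jump $u=a$, and there your diagnosis is correct while the paper's proof is not. The paper absorbs the straddle band into the constant $K_2=\max_{x\in(0,a),y\in(a,1)}\frac{f_1(y)-f_0(x)}{y-x}$, but (H2) forces $f_0(a)<0<f_1(a)$, so the quotient tends to $+\infty$ as $x\uparrow a$, $y\downarrow a$; hence $K_2=+\infty$, $\delta_0=0$, $\sigma=\infty$, and the paper's choice of constants is vacuous. As you observe, at a point where $u^*(\xi)=a-p/2$ the right-hand side of \eqref{eq-supU} behaves like $f_0(a)-f_1(a)<0$ as $p\to 0$, so the unmodified $U^+$ is genuinely not a pointwise super-solution for small $\delta e^{-\gamma t}$, no matter how $\sigma$ and $\delta_0$ are chosen. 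That said, your own argument stops precisely at this point: the corner correction to $U^\pm$ is announced but not constructed, and you do not verify that the corrected barriers remain ordered, remain admissible for the comparison principle with the discontinuous $f$, or still carry the $\sigma\delta[1-e^{-\gamma t}]$ and $\delta e^{-\gamma t}$ bookkeeping that the proof of Theorem \ref{Thm:stab} relies on. So the proposal is an accurate critique of the lemma rather than a proof of it; to close it you must either carry out the interface modification in detail or reformulate the statement (for instance as a weak or viscosity super-solution claim, or with barriers spliced from the explicit profiles of Section 1 near $u=a$).
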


\begin{theorem}[Stability]\label{Thm:stab}
Assume that {\rm(H1)-(H3)} hold, $u^*(z)$ is the travelling wave solution to \eqref{eq-RDE}. Then there exist a positive constant $\kappa$ and some positive constants $a_0, a_1$ with $0<a_0\leq a\leq a_1<1$, such that for any $u_0\in L^\infty(\mathbb{R})$ satisfying $0\leq u_0\leq1$ and \[\lim\limits_{x\to+\infty}u_0(x)>a_1,\quad \lim\limits_{x\to-\infty}u_0(x)<a_0,\]
the solution $u(x,t)$ of \eqref{eq-RDE} with initial value $u(x,0)=u_0(x)$ has the property that 
\[\|u(x,t)-u^*(x+ct+z^*)\|_{L^\infty(\mathbb{R})}\leq K e^{-\kappa t}\quad\mbox{for all }~t\geq0\]
where $z^*$ and $K$ are constants depending on $u_0(x)$.
\end{theorem}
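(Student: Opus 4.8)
The plan is to run the classical Fife--McLeod squeezing scheme, adapted to the jump discontinuity at $u=a$, by assembling the four lemmas of this section into a two-phase argument: first drive $u(\cdot,t)$ into a band of translates of $u^*$, then contract that band geometrically until it collapses onto a single translate. First I would fix a small $\varepsilon\in(0,\delta_0]$ and choose the thresholds $a_0,a_1$ so that the hypotheses $\lim_{x\to-\infty}u_0<a_0$ and $\lim_{x\to+\infty}u_0>a_1$ place $u_0$ in the basin of the wave: since $f_0<0$ on $(0,a]$ pushes the left tail toward $0$ and $f_1>0$ on $[a,1)$ pushes the right tail toward $1$, Lemma \ref{lem:lim01} gives $\overline{\lim}_{x\to-\infty}u(x,t)\to0$ and $\underline{\lim}_{x\to+\infty}u(x,t)\to1$. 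Combined with $0\le u\le1$ and Lemma \ref{lem:xt-MT}, this produces a time $T_\varepsilon$ and half-width $M_\varepsilon$ with
\[
u^*(x+cT_\varepsilon-M_\varepsilon)-\varepsilon\le u(x,T_\varepsilon)\le u^*(x+cT_\varepsilon+M_\varepsilon)+\varepsilon,
\]
which is the entry into the wave manifold.

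Next, relabelling $t_0=T_\varepsilon$, I would invoke Lemma \ref{lem:supb} to build the super-solution $U^+$ with base shift $z_0^+=M_\varepsilon$ and the sub-solution $U^-$ with $z_0^-=-M_\varepsilon$, both of amplitude $\delta=\varepsilon$ and both launched at $t_0$. Monotonicity of $u^*$ together with $\delta=\varepsilon$ guarantees $U^-(\cdot,t_0)\le u(\cdot,t_0)\le U^+(\cdot,t_0)$, and the comparison principle propagates the ordering to all $t\ge t_0$. As $t\to\infty$ the correction $\delta e^{-\gamma(t-t_0)}\to0$, so $u$ is trapped between two translates of $u^*$ whose shifts are separated by $2M_\varepsilon+2\sigma\varepsilon$; the amplitude of this trapping decays like $e^{-\gamma t}$.

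The main obstacle, and the step that carries the real weight, is collapsing this band of half-width $\approx M_\varepsilon$ down to a \emph{single} translate, since one application of $U^\pm$ only prevents the band from spreading and lets the shift separation grow by $2\sigma\delta$, it does not shrink it. Here I would use the Harnack-type Lemma \ref{lem-vphi}: applied to the nonnegative difference between the upper translate and $u$ (which satisfies a linear inequality of the form \eqref{eq:vphi} once $f$ is linearized and its jump at $u=a$ is absorbed into the bounded coefficient $k$), it bounds the pointwise gap on every compact window from below by a multiple of its integral. This yields a strict interior gain $\theta\in(0,1)$ over each unit time step, which I would exploit to \emph{reset} the barriers at time $t_0+1$ with a reduced half-width and amplitude satisfying $h_1+\delta_1\le(1-\theta)(h_0+\delta_0)$ and a recentred shift obeying $|\xi_1-\xi_0|\le C(h_0+\delta_0)$. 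Iterating, $h_n+\delta_n\le(1-\theta)^n(M_\varepsilon+\varepsilon)\to0$ geometrically while the centres $\xi_n$ form a Cauchy sequence; establishing this uniform geometric gain from Lemma \ref{lem-vphi} is the crux.

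Finally, passing to the limit I would set $z^*=\lim_n\xi_n$ and read off $\kappa$ from $\gamma$ and $\log\tfrac{1}{1-\theta}$ and $K$ from the accumulated half-widths, obtaining $\|u(\cdot,t)-u^*(\cdot+ct+z^*)\|_{L^\infty(\mathbb{R})}\le Ke^{-\kappa t}$, with the dependence of $z^*,K$ on $u_0$ entering only through $T_\varepsilon$ and $M_\varepsilon$. A point demanding care throughout is that the discontinuity of $f$ at $u=a$ makes $U^\pm$ only $C^1$, so the comparison principle must be applied across the interface $z=0$ in the weak sense; I would check that the sign of the jump in $u^*_z$ there keeps $U^+$ a genuine super-solution and $U^-$ a genuine sub-solution, which is precisely what Lemmas \ref{lem:wpm}--\ref{lem:cdepend} were arranged to secure.
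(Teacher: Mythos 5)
Your proposal follows essentially the same route as the paper: the Fife--McLeod/Chen squeezing scheme, with Lemmas \ref{lem:lim01} and \ref{lem:xt-MT} providing the initial trapping between translates, Lemma \ref{lem:supb} preventing the band from spreading while the amplitude decays, and the Harnack-type Lemma \ref{lem-vphi} supplying the strict contraction of the shift separation that is iterated to geometric decay of the functional (half-width plus weighted amplitude, the paper's $D(t)=Z_2-Z_1+A\delta$). Your outline is in fact more explicit than the paper about the initialization step; the only small slip is attributing a jump to $u^*_z$ at $z=0$ (the wave is $C^1$ by Theorem \ref{thm:exist}; the discontinuity lives in $f$ and is absorbed in Lemma \ref{lem:supb}), which does not affect the argument.
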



\begin{proof}[Proof of Lemma \ref{lem-vphi}]
Since 
\[\varphi_t-\varphi_{xx}\geq k\varphi \geq-\|k\|_{\infty}\varphi,\]
where $\|k\|_\infty=\|k\|_{L^\infty}$. Hence, for $x\in[-L,L]$,
\begin{align*}
    \varphi(x,t) &\geq \frac{e^{-\|k\|_\infty t}}{2\sqrt{\pi t}} \int_{-\infty}^{\infty}e^{-\frac{(x-y)^2}{4t}}\varphi(y,0) dy \\ &\geq\frac{e^{-\|k\|_\infty t-\frac{L^2}{t}}}{2\sqrt{\pi t}}\int_{-L}^{L}\varphi(y,0) dy.  
\end{align*}
Choosing $\varepsilon(t,L)={\exp({-\|k\|_\infty t-\frac{L^2}{t}})}/{2\sqrt{\pi t}}$, thus the result is followed.
\end{proof}

\begin{proof}[Proof of Lemma \ref{lem:lim01}] 
First, let $v_0(x,t)$ be the solution of 
\begin{equation}
  \left\{ \begin{aligned} &(v_0)_{t}-(v_0)_{xx}=f_0(v_0),~(x,t)\in\mathbb{R}\times[0,\infty) \\ 
  &v_0(x,0)=u_0(x), ~\mbox{if}~ x\leq -M_0, \\ &v_0(x,0)=a_0, ~\mbox{if}~ x>-M_0.
  \end{aligned} \right.
\end{equation}
Where $M_0$ is a constant such that 
\[u_0(x)<a_0\leq a,~\mbox{for}~ \forall x\leq-M_0,\quad f_0(v)<0,~\mbox{for}~ \forall v\in(0,a].\]
Similarly, let $v_1(x,t)$ be the solution of 
\begin{equation}
  \left\{ \begin{aligned} &(v_1)_{t}-(v_1)_{xx}=f_1(v_1),~(x,t)\in\mathbb{R}\times[0,\infty) \\ 
  &v_1(x,0)=u_0(x), ~\mbox{if}~ x\geq M_1, \\ &v_1(x,0)=a_1, ~\mbox{if}~  x<M_1.
  \end{aligned} \right.
\end{equation}
Where $M_1$ is a constant such that 
\[u_0(x)>a_1\geq a, ~\mbox{for}~\forall x\geq M_1,\quad f_1(v)>0, ~\mbox{for}~ \forall v\in[a,1).\]

Second, define $q_0(t)$ and $q_1(t)$ are the solutions of 
\begin{equation}
    \left\{\begin{aligned}
    &(q_0)_t=f_0(q_0),~ t>0 \\
    & q_0(0)=a,
    \end{aligned}\right.
\end{equation}
and 
\begin{equation}
    \left\{\begin{aligned}
    &(q_1)_t=f_1(q_1),~ t>0 \\
    & q_1(0)=a,
    \end{aligned}\right.
\end{equation}
respectively. 

Since $f_1(v)>0$ for $v\in[a,1)$, $f_0(v)<0$ for $v\in(0,a]$, and $f_1(1)=0, f_0(0)=0$, we have 
\[\lim\limits_{t\to\infty}q_1(t)=1, \lim\limits_{t\to\infty}q_0(t)=0.\]
Note that $q_1(0)\leq v_1(x,0), q_0(0)\geq v_0(x,0)$, for $\forall x\in\mathbb{R}$. By comparison principle, 
\[q_1(t)\leq v_1(x,t)\leq 1, \quad \mbox{for}~\forall x\in\mathbb{R}, t>0. \]
\[q_0(t)\geq v_0(x,t)\geq 0, \quad \mbox{for}~\forall x\in\mathbb{R}, t>0. \]
Hence, 
\[\lim\limits_{t\to\infty}v_1(x,t)=1, ~\mbox{uniformly in }~x.\]
\[\lim\limits_{t\to\infty}v_0(x,t)=0, ~\mbox{uniformly in }~x.\]

In the following, we will prove \eqref{lim-1}. 
Set $w_1=u_1-v_1$, here $u_1$ is the solution of
\begin{equation*}
\left\{ \begin{aligned}
 &(u_1)_t-(u_1)_{xx}=f_1(u_1), \quad  &(x,t)\in\mathbb{R}\times[0,\infty), \\ 
 &u_1(x,0)=\max\{u_0(x),a\}, \quad &x\in\mathbb{R}. \end{aligned}\right. 
\end{equation*}
we have
\[(w_1)_t-(w_1)_{xx}=f_1(u_1)-f_1(v_1)=k(x,t)w.\]
Where $k(x,t)=\int_0^1f'_1(v_1+s(u_1-v_1))ds$.

Set $K_1=\max_{a\leq u\leq1}|f'_1(u)|$.
We define $w_1^+$ and $w_1^-$ are the solutions of
\begin{equation*}
    \left\{\begin{aligned}
    &(w_1^+)_t-(w_1^+)_{xx}=K_1w_1^+,  \\ &w_1^+(x,0)=\max\{w_1(x,0),0\}, 
    \end{aligned}\right.
\end{equation*}
and
\begin{equation*}
    \left\{\begin{aligned}
    &(w_1^-)_t-(w_1^-)_{xx}=K_1w_1^-,  \\ &w_1^-(x,0)=\min\{w_1(x,0),0\},
    \end{aligned}\right.
\end{equation*}
respectively. Then $w_1(x,0)=w_1^+(x,0)+w_1^-(x,0)$,
Hence, 
$w_1(x,t)=w_1^+(x,t)+w_1^-(x,0)$.

Since $w_1^+(x,0)\geq0$, by the Maximum principle,
\[0\leq w_1^+(x,t)\leq\int_{-\infty}^\infty \frac{\exp(K_1t-\frac{|x-y|^2}{4t})}{2\sqrt{\pi t}} w_1^+(y,0) dy.\]
Similarly,
\[0\leq -w_1^-(x,t)\leq -\int_{-\infty}^\infty \frac{\exp(K_1t-\frac{|x-y|^2}{4t})}{2\sqrt{\pi t}} w_1^-(y,0) dy.\]
Therefore,
\[|w_1(x,t)|=w_1^+(x,t)-w_1^-(x,t)\leq \int_{-\infty}^\infty \frac{\exp(K_1t-\frac{|x-y|^2}{4t})}{2\sqrt{\pi t}} |w_1(y,0)| dy.\]
Note that $w_1(x,0)=0$, when $x\geq M$; $w_1(x,0)\leq 1$, when $x<M$.
Hence,
\begin{align*}
    |w_1(x,t)| &\leq\int_{-\infty}^M \frac{\exp(K_1t-\frac{|x-y|^2}{4t})}{2\sqrt{\pi t}} dy \\ 
    &=\frac{e^{K_1t}}{\sqrt{\pi}}\int_{z}^{\infty}e^{-s^2}ds.
\end{align*}
Here $z=\frac{x-M}{2\sqrt{t}}$.
Notice that
\[\int_{z}^{\infty}e^{-s^2}ds\leq\frac{1}{2z}e^{-z^2}\leq\frac{\sqrt{t}\exp(K_1t-\frac{(x-M)^2}{4t})}{x-M},\quad\mbox{when}x\geq M.\]
Hence, when $x\geq M+\sqrt{4K}t$,
\[|u_1(x,t)-v_1(x,t)|\leq\frac{\sqrt{t}}{x-M}\leq\frac{1}{\sqrt{4K_1t}}.\]
Thus,
\[\lim_{t\to\infty} \mathop{\underline{\lim}}\limits_{x\to+\infty}u(x,t)=\lim_{t\to\infty} \mathop{\underline{\lim}}\limits_{x\to+\infty}u_1(x,t)=1.\]
In the same way, if we set $w_0=u_0-v_0$, here $u_0$ is the solution of
\begin{equation*}
\left\{ \begin{aligned}
 &(u_0)_t-(u_0)_{xx}=f_0(u_0), \quad  &(x,t)\in\mathbb{R}\times[0,\infty), \\ 
 &u_0(x,0)=\min\{u_0(x),a\}, \quad &x\in\mathbb{R}. \end{aligned}\right. 
\end{equation*}
Then we can also prove
\[\lim_{t\to\infty} \mathop{\overline{\lim}}\limits_{x\to+\infty}u(x,t)=\lim_{t\to\infty} \mathop{\overline{\lim}}\limits_{x\to-\infty}u_0(x,t)=0.\]
\end{proof}

\begin{proof}[Proof of Lemma \ref{lem:xt-MT}]
From Lemma \ref{lem:lim01}, we know for $\forall \varepsilon>0$, $\exists T_\varepsilon>0, \tilde{M}_\varepsilon>0$, such that
\[u(x,T_\varepsilon)\geq 1-\varepsilon, \quad \forall x\geq \tilde{M}_\varepsilon,\]
\[u(x,T_\varepsilon)\leq\varepsilon, \quad \forall x\leq-\tilde{M}_\varepsilon.\]
Let $a_\varepsilon$ be the point such that
\[u^*(-a_\varepsilon+cT_\varepsilon)\leq\varepsilon,\quad u^*(a_\varepsilon+cT_\varepsilon)\geq1-\varepsilon.\]
Set $M_\varepsilon=\tilde{M}_\varepsilon+a_\varepsilon$, we have
\begin{equation*}
    u^*(x+cT_\varepsilon+M_\varepsilon)+\varepsilon \geq\left\{\begin{aligned}
    & 1, &\mbox{if}~x\geq-\tilde{M}_\varepsilon, \\ &\varepsilon, &\mbox{if}~ x<-\tilde{M}_\varepsilon. 
    \end{aligned}\right.
\end{equation*}
\begin{equation*}
    u^*(x+cT_\varepsilon-M_\varepsilon)-\varepsilon\leq\left\{\begin{aligned}
    & 0, &\mbox{if}~x\leq\tilde{M}_\varepsilon, \\ &1-\varepsilon, &\mbox{if}~x>\tilde{M}_\varepsilon.
    \end{aligned}\right.
\end{equation*}
Thus we claim \eqref{eq:betw} hold.
\end{proof}

\begin{proof}[Proof of Lemma \ref{lem:supb}]
We define a large positive constant $M\gg 1$, and define 
\[K_0=\max\limits_{0\leq u\leq a}|f'_0(u)|,\quad K_1=\max\limits_{a\leq u\leq1}|f'_1(u)|,\quad K_2=\max\limits_{x\in(0,a),y\in(a,1)}\frac{f_1(y)-f_0(x)}{y-x},\] \[\delta_0=\frac{\gamma}{K_0+K_1+K_2},\quad \sigma=\frac{\gamma+K_0+K_1+K_2}{\gamma\epsilon^*},\quad \epsilon^*=\min\limits_{|z|\leq M}|u^*_z(z)|.\]
Substitute $U^+(x,t)$ into \eqref{eq-RDE}, and notice $u^*(z)$ satisfies $u''-cu'+f(u)=0$, then we derive
\begin{equation}\label{eq-supU}
    U^+_t-U^+_{xx}-f(U^+)=\delta e^{-\gamma t}\left[\sigma\gamma u_z^*-\gamma+\delta e^{\gamma t}(f(u^*)-f(u^*+\delta e^{-\gamma t}))\right].
\end{equation}

Next, we will calculate the right-hand side of \eqref{eq-supU} from $|z|>M$ and $|z|\leq M$.
For $|z|>M$. Since $u^*(z)\to 0,$ as $z\to-\infty$, $u^*(z)\to 1$, as $z\to+\infty$, also notice that $u_z^*(\pm\infty)=0$. By the definition of $K_0, K_1$. Thus 
\[f(u^*)-f(u^*+\delta e^{-\gamma t})\geq-K_0\delta e^{-\gamma t},\quad z<-M.\]
\[f(u^*)-f(u^*+\delta e^{-\gamma t})\geq-K_1\delta e^{-\gamma t},\quad z>M.\]
For $|z|\leq M$. Because of the definition of $K_2$, it holds
\[f(u^*)-f(u^*+\delta e^{-\gamma t})\geq-K_2\delta e^{-\gamma t},\quad |z|\leq M.\]
Combining the definition of $\sigma$, then
\[U^+_t-U^+_{xx}-f(U^+)\geq0.\]

In the same way to consider $U^-(x,t)$, we substitute it into \eqref{eq-RDE}, then we derive
\[U^-_t-U^-_{xx}-f(U^-)=\delta e^{-\gamma t}\left[\gamma-\sigma\gamma u_z^*+\delta e^{\gamma t}(f(u^*)-f(u^*-\delta e^{-\gamma t}))\right]\leq0.\]
Above all the statements we just prove the conclusions in the Lemma.
\end{proof}

\begin{proof}[Proof of Theorem \ref{Thm:stab}]

Applying Lemma \ref{lem:supb}, by choosing $z_0=0$ in $U^\pm(x,t)$, and $z=x+ct$. It yields
\[u^*(z-\sigma\delta[1-e^{-\gamma t}])-\delta e^{-\gamma t}\leq u(z-ct,t)\leq u^*(z+\sigma\delta[1-e^{-\gamma t}])+\delta e^{-\gamma t}.\]

For every $t>0$, set
\[\Sigma(t)=\{(Z_1(t),Z_2(t),\delta(t))\mid   u^*(z+Z_1(t))-\delta(t) \leq u(z-ct,t)\leq u^*(z+Z_2(t))+\delta(t)\}.\]
\[D(t)=\min\limits_{(Z_1,Z_2,\delta)\in\Sigma(t)}\{Z_2(t)-Z_1(t)+A\delta(t)\}.\]
Where $0\leq\eta\leq\varepsilon_0,\xi_i\leq M_{\varepsilon_0},$

We want to show that there exists positive constant $\theta<1$, for any $\tau>0$, it hold 
\[D(t+\tau)\leq \theta D(t).\]
We divide the proof into two case:

Case 1: $Z_2(t)-Z_1(t)\leq A\delta(t)$.
Then, for any $x\in\mathbb{R},c>0$,
\[u(x,t+\tau)\leq u^*(x+ct+c\tau+Z_2(t)+\sigma\delta(t)[1-e^{-\gamma\tau}])+\delta(t)e^{-\gamma\tau}\]
\[u(x,t+\tau)\geq u^*(x+ct+c\tau+Z_1(t)-\sigma\delta(t)[1-e^{-\gamma\tau}])-\delta(t)e^{-\gamma\tau}\]
Set 
\[Z_1(t+\tau)=Z_1(t)-\sigma\delta(t)[1-e^{-\gamma\tau}],\]
\[Z_2(t+\tau)=Z_2(t)+\sigma\delta(t)[1-e^{-\gamma\tau}],\]
\[\delta(t+\tau)=\delta(t)e^{-\gamma\tau}.\]
Then
\[(Z_1(t+\tau),Z_2(t+\tau),\delta(t+\tau))\in\Sigma(t+\tau).\]
Hence
\begin{align*}
    D(t+\tau)&\leq Z_2(t+\tau)-Z_1(t+\tau)+A\delta(t+\tau) \\ &=Z_2(t)-Z_1(t)+2\sigma\delta(t)[1-e^{-\gamma\tau}]+A\delta(t)e^{-\gamma\tau} \\ &\leq D(t)\left(\frac{1}{2}+\frac{2\sigma}{A}+e^{-\gamma\tau}\right) \\ &\leq \frac{3}{4}D(t)
\end{align*}
By choosing $\tau\geq\frac{\ln8}{\gamma}, A=16\sigma$.

Case 2: $Z_2(t)-Z_1(t)\geq A\delta(t)$.

Using the definition of $\epsilon^*$ in the proof of Lemma \ref{lem:supb}, it follows that
\begin{equation}\label{eq-intu}
\int_{-M}^Mu^*(z+\sigma\delta[1-e^{-\gamma t}])-u^*(z-\sigma\delta[1-e^{-\gamma t}]) dz \geq 4\epsilon^*M\sigma\delta[1-e^{-\gamma t}].
\end{equation}
We assume that 
\[\int_{-M}^{M}u^*(z+\sigma\delta[1-e^{-\gamma t}])-u(z-ct,t) dz\geq \int_{-M}^{M} u(z-ct,t)-u^*(z-\sigma\delta[1-e^{-\gamma t}]) dz.\]
Then from \eqref{eq-intu}, we have
\begin{equation}\label{eq:2intu}
\int_{-M}^{M}u^*(z+\sigma\delta[1-e^{-\gamma t}])-u(z-ct,t) dz\geq 2\epsilon^*M\sigma\delta[1-e^{-\gamma t}]. 
\end{equation}

Let 
\[\Phi(x,t)=u^*(x+ct+\sigma\delta[1-e^{-\gamma t}])+\delta e^{-\gamma t}-u(x,t),\]
then $\Phi(x,t)$ is the solution of \eqref{eq:vphi} with $k\varphi=K\Phi$, thus we applying \eqref{sol-vphi} to get
\[\Phi(x,\tau)\geq \varepsilon(\tau,L)\int_{-L}^{L}\Phi(x,\tau)dx.\]
Combing \eqref{eq:2intu}, then when $|x|\leq L$, we have
\begin{equation}
    u^*(x+c\tau+\sigma\delta[1-e^{-\gamma \tau}])+\delta e^{-\gamma \tau}-u(x,\tau)\geq\varepsilon(\tau,L) L\delta [2\epsilon^*\sigma(1-e^{-\gamma\tau})+e^{-\gamma\tau}].
\end{equation}

When $|x|>L$,
\[u(x,\tau)\leq u^*(x+c\tau-H(\tau)\delta+\sigma\delta[1-e^{-\gamma\tau}])+\delta e^{-\gamma\tau}.\]
Here $H(\tau)=2\varepsilon(\tau,L)L\epsilon^*\sigma$.
Thus, 
\[u(x,t+\tau)\leq u^*(x+ct+c\tau-H(\tau)\delta+\sigma\delta[1-e^{-\gamma \tau}])+H(\tau)\delta+\delta e^{-\gamma\tau}. \]
Similarly,
\[u(x,t+\tau)\geq u^*(x+ct+c\tau+H(\tau)\delta-\sigma\delta[1-e^{-\gamma \tau}])-H(\tau)\delta-\delta e^{-\gamma\tau}. \]

Set
\[Z_1(t+\tau)=Z_1(t)+H(\tau)\delta(t),\]
\[Z_2(t+\tau)=Z_2(t)-H(\tau)\delta(t),\]
\[\delta(t+\tau)=\delta(t)e^{-\gamma\tau}+H(\tau)\delta(t).\]
Hence,
\begin{align*}
    D(t+\tau)&\leq Z_2(t+\tau)-Z_1(t+\tau)+A\delta(t+\tau) \\ &= Z_2(t)-Z_1(t)+(A-2)H(\tau)\delta(t)+A\delta(t)e^{-\gamma\tau} \\ &\leq (Z_2(t)-Z_1(t))+\left(\frac{A-2}{A}H(\tau)+e^{-\gamma\tau}\right)A\delta(t) \\ &\leq\nu D(t), 
\end{align*}
where $0<\nu<1$.

Therefore, $\exists 0<\theta<1$, such that
\[D(t+1)\leq\theta D(t),\quad \forall t\geq 1.\]
Then, $D(t)\leq Me^{t\ln\theta}$.
Therefore, there exists $0\leq\xi(t)\leq Me^{-\rho t}, 0\leq\eta(t)\leq Me^{-\rho t}$, such that 
\[\|u(x,t)-u^*(x+ct+\xi(t))\|_{\infty} =o(1)e^{-\rho t}.\]
Here $\rho=-\ln\theta$.
\end{proof}

\section*{Acknowledgments}
This work is inspired by \cite[Theorem 3.1]{HKMT}, in which it takes a distinct method into account.


\end{document}